\numberwithin{equation}{section}
\def\today{\ifcase\month\or Jan\or Febr\or  Mar\or  Apr\or May\or Jun\or  Jul\or Aug\or  Sep\or  Oct\or Nov\or  Dec\or\fi \space\number\day, \number\year}
\newcommand{\CC}{\mathbb C}
\newcommand{\EE}{\mathbb E}
\newcommand{\QQ}{\mathbb Q}
\newcommand{\RR}{\mathbb R}
\newcommand{\ZZ}{\mathbb Z}
\newcommand{\Sym}{{\mathrm{Sym}}}
\numberwithin{equation}{section}
\newtheorem{theorem}{Theorem}[section]
\newtheorem{observation}[theorem]{Observation}
\newtheorem{proposition}[theorem]{Proposition}
\newtheorem{definition-lemma}[theorem]{Definition-Lemma}
\theoremstyle{definition}
\theoremstyle{remark}
\newtheorem{remark}[theorem]{Remark}
\begin{document}

\title[Modular Forms via Invariant Theory]{Modular Forms via Invariant Theory}
\begin{abstract}
We discuss two simple but useful observations that allow the construction
of modular forms from  given ones using invariant theory. The first one 
deals with elliptic modular forms and their derivatives, and generalizes
the Rankin-Cohen bracket, while the
second one deals with vector-valued modular forms of genus greater than $1$.
\end{abstract}

\author{Fabien Cl\'ery}
\address{Institute of Computational and Experimental
Research in Mathematics,
121 South Main Street, Providence, RI 02903, USA}
\email{cleryfabien@gmail.com}

\author{Gerard van der Geer}
\address{Korteweg-de Vries Instituut, Universiteit van
Amsterdam, Postbus 94248,
1090 GE  Amsterdam, The Netherlands}
\email{g.b.m.vandergeer@uva.nl}

\maketitle

\begin{section}{Introduction}\label{sec-intro}
In this paper we present two observations on the use of invariant theory
in the theory of modular forms. 

The first observation is that the invariant theory of binary forms
of degree $r$ can be applied to an elliptic modular form to
produce in a very simple and direct way 
new modular forms that are expressions in the first $r$
derivatives of the elliptic modular form.
For example, the $r$th transvectant
of a binary form of degree $r$ produces a Rankin-Cohen bracket.
But one can use all invariants and there are many more invariants than
just the transvectants that give rise to the Rankin-Cohen brackets.
The novelty is to associate a binary form to a modular form and a natural
number and to use invariant theory of this binary form.
It extends to multi-invariants of several binary forms and 
then associates a modular form to a tuple of elliptic modular forms and
their derivatives.

The second observation deals with vector-valued modular forms.
We can view a modular form as a section of an automorphic 
vector bundle on a moduli space
or on an arithmetic quotient of a bounded symmetric domain. 
Such a bundle corresponds to a factor of automorphy, or equivalenly,
it is obtained by applying a Schur functor to the Hodge bundle.
In turn, this is given by  a representation of a group, usually a general
linear group, and
the weight of the modular form 
refers to this representation.
For example, for a Siegel modular form of degree
$g$ the corresponding factor of automorphy 
is given by an irreducible representation  
of ${\rm GL}(g)$. 

The second observation of this paper is now 
that we can apply invariant
theory for this  ${\rm GL}(g)$-representation  to construct 
new modular forms from a given one. 

For example, for Siegel modular forms of degree $2$ the factors 
of automorphy are indexed by the irreducible representations of
${\rm GL}(2)$ and here we can apply the invariant theory of binary
forms to create new modular forms from a given one.

When we apply this observation 
to the invariant theory of binary sextics 
and a non-zero 
Siegel modular cusp form $\chi_{6,8}$ of degree $2$ and
weight $(6,8)$, we recover the method of 
\cite{CFG1} to construct
all degree two Siegel modular forms on ${\rm Sp}(4,{\ZZ})$ 
from just the form~$\chi_{6,8}$.  

Similarly, for degree $3$ we
apply it to the invariant theory of ternary quartics and recover
the method of the paper \cite{CFG2} that allows one to construct 
all Siegel modular forms on ${\rm Sp}(6,{\ZZ})$ from one cusp form $\chi_{4,0,8}$
of weight $(4,0,8)$. 

The second observation  applies equally well to all kinds of arithmetic groups,
to modular
forms on other bounded symmetric domains, or to Teichm\"uller forms
on the moduli spaces of curves.

We illustrate this method by examples involving Siegel modular forms,
Teichm\"uller forms and Picard modular forms.

We finish the paper by giving an example of a generalization of the first
observation to vector-valued modular forms of more variables.

\end{section}
\section*{Acknowledgements}
The first author was supported by Simons Foundation Award
546235 at the Institute for Computational and
Experimental Research in Mathematics at Brown University.
The second author thanks ICERM for hospitality enjoyed during a visit
when this paper was written. We thank Carel Faber and Don Zagier for 
useful remarks. We thank the referee for pointing out inaccuracies in an
earlier version.

\begin{section}{The First Observation}
Here we start with an elliptic modular form $f$
of weight $k$
on some congruence subgroup $\Gamma$ of ${\rm SL}(2,{\ZZ})$. 
The space of modular forms  of weight $k$ on $\Gamma$
is denoted by $M_k(\Gamma)$ and the subspace of cusp forms by $S_k(\Gamma)$.
We write $\tau$ for the variable in
the upper half plane~$\mathfrak{H}$.

We consider the derivatives $f^{(n)}=d^nf/d\tau^n$ for $n=0,\ldots,r$ of $f$
and using these derivatives we shall associate a modular form 
to each invariant of binary forms
of degree~$r$.

Let $V=\langle x_1,x_2 \rangle$ be the vector space generated by $x_1$
and $x_2$. The group ${\rm GL}(V)={\rm GL}(2)$ acts on $V$.
Recall that an invariant of a binary form $\sum_{i=0}^r a_i \binom{r}{i}x_1^{r-i}x_2^i
\in {\rm Sym}^r(V)$ of degree~$r$ is a (homogeneous) 
polynomial in the coefficients 
$a_0,\ldots,a_r$ invariant under ${\rm SL}(V)$. 
By its degree we mean the degree in the $a_i$. An invariant  
has order $n$ if for 
all its monomials the sum of the indices of the $a_i$ is equal to $n$;
equivalently, if it changes under ${\rm GL}(V)$ by the $n$th power
of the determinant.  We note that for an invariant of degree $d$
and order $n$ we have $dr=2n$.
Such an invariant corresponds to a ${\rm GL}(V)$-equivariant embedding 
$\det(V)^{\otimes dr} \hookrightarrow {\rm Sym}^d({\rm Sym}^r(V))$.
We will write $V_r$ for ${\rm Sym}^r(V)$. 
The set of invariants $\mathcal{I}(V_r)$ forms in a natural way a ring graded by
the degree: $\mathcal{I}(V_r)=\oplus_d \mathcal{I}_d(V_r)$.
 
\begin{theorem}\label{thm}
Suppose that $I$ is an invariant of degree $d$  and order $n$ of
a binary form $\sum_{i=0}^r a_i \binom{r}{i} x_1^{r-i}x_2^i$ 
of degree~$r$ and let
$f$ be an elliptic modular form of weight $k$ on a congruence subgroup
$\Gamma$ of ${\rm SL}(2,{\ZZ})$.
Then by the substitution
$$
a_i \mapsto i! \, \binom{k+r-1}{i}\, f^{(r-i)}
$$
for $i=0,\ldots,r$ in $I$,
we obtain a map
$ \Psi: \mathcal{I}_d(V_r) \times M_k(\Gamma)
 \longrightarrow M_{d k+2n}(\Gamma) $.
For fixed~$f$, the map $I\mapsto \Psi(I,f)$ defines a homomorphism
$\mathcal{I}(V_r) \to R(\Gamma)$ with $R(\Gamma)$ the ring of modular forms 
on $\Gamma$.
\end{theorem}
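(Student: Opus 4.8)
\smallskip
\noindent\emph{Sketch of proof.}
The plan is to check directly that $g:=\Psi_I(f)$ satisfies the transformation law $g(\gamma\tau)=(c\tau+d)^{\,dk+n}\,g(\tau)$ for every $\gamma=\mat\in\Gamma$, and then to verify holomorphy on $\mathfrak H$ and at the cusps. The organizing principle is that the normalized derivatives $a_i=i!\binom{k+r-i}{i}f^{(r-i)}$, read as the coefficients of the binary form $\sum_{i=0}^r a_i\binom{r}{i}x_1^{r-i}x_2^i\in\mathrm{Sym}^r(V)$, should transform under $\Gamma$ through the $\mathrm{Sym}^r$-action of a single element of $\mathrm{GL}(2)$ depending on $\gamma$ and $\tau$, up to a scalar automorphy factor. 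Granting this, the very definition of an invariant of degree $d$ and order $n$ will force the weight to be $dk+n$.

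First I would record how the individual derivatives transform. Differentiating $f(\gamma\tau)=(c\tau+d)^kf(\tau)$ repeatedly and using $d(\gamma\tau)/d\tau=(c\tau+d)^{-2}$, an induction on $n$ yields
\begin{equation*}
f^{(n)}(\gamma\tau)=\sum_{m=0}^{n}\binom{n}{m}\frac{(k+n-1)!}{(k+m-1)!}\,c^{\,n-m}(c\tau+d)^{\,k+n+m}f^{(m)}(\tau),
\end{equation*}
the coefficients being the binomial and Pochhammer factors produced by the Fa\`a di Bruno expansion. This already exhibits the quasi-modular correction terms (the summands with $m<n$) that must ultimately be accounted for.

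Second --- and this is the heart of the matter --- I would show that the particular normalization $a_i=i!\binom{k+r-i}{i}f^{(r-i)}$ is precisely the one turning this quasi-modular behaviour into genuine $\mathrm{Sym}^r$-covariance: that there is an explicit $M(\gamma,\tau)\in\mathrm{GL}(2)$, triangular with diagonal entries powers of $(c\tau+d)$ and off-diagonal entries carrying the factors of $c$, and a scalar automorphy factor $\lambda(\gamma,\tau)$, for which $(a_0,\dots,a_r)(\gamma\tau)=\lambda(\gamma,\tau)\,\rho_r\big(M(\gamma,\tau)\big)\,(a_0,\dots,a_r)(\tau)$, where $\rho_r=\mathrm{Sym}^r$ acts on the coefficient vector. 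Verifying this identity --- i.e.\ checking that after the binomial normalization the correction terms above reassemble into the single matrix $\rho_r(M)$ rather than into an unstructured quasi-modular tuple --- is the main obstacle. It comes down to a combinatorial matching of the coefficients in the displayed formula with the entries of $\rho_r(M)$, and it is exactly here that the precise shape of $\binom{k+r-i}{i}$ is used.

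Third, I would feed this into the invariant. Because $I$ is invariant under $\mathrm{SL}(2)$, the unipotent and special-linear part of $\rho_r(M)$ acts trivially, so only $\det M(\gamma,\tau)$ survives; since $I$ has order $n$ this produces a determinant factor, and since $I$ is homogeneous of degree $d$ the scalar $\lambda(\gamma,\tau)$ enters through its $d$-th power. Bookkeeping these exponents collapses the transformation of $g=\Psi_I(f)$ to multiplication by $(c\tau+d)^{\,dk+n}$, the power $dk$ coming from the $d$ homogeneity factors of weight $k$ and the power $n$ from the order of $I$. Finally, holomorphy of $g$ on $\mathfrak H$ is immediate, as $g$ is a polynomial in the holomorphic functions $f^{(0)},\dots,f^{(r)}$; the conditions at the cusps of $\Gamma$ follow by differentiating the $q$-expansion of $f$, so that each $f^{(j)}$ is again holomorphic at every cusp. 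Hence $g\in M_{dk+n}(\Gamma)$, and $g\in S_{dk+n}(\Gamma)$ when $f$ is a cusp form. The construction is additive in $I$, giving the asserted linearity of $I\mapsto\Psi_I$; note that for fixed $I$ the map $f\mapsto\Psi_I(f)$ is homogeneous of degree $d$.
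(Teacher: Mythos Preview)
Your approach is correct and follows the same essential idea as the paper: the normalized derivatives of $f$, packaged as the coefficients of a binary form of degree $r$, transform under $\Gamma$ via $(c\tau+d)^k$ times the $\mathrm{Sym}^r$-action of a single upper-triangular matrix in $\mathrm{GL}(2)$, so that evaluating an invariant of degree $d$ and order $n$ produces a factor $(c\tau+d)^{dk+n}$.

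The one organizational difference worth noting is that the paper introduces an auxiliary variable $z$ and works on $\mathfrak{H}\times\CC$ with the action $(\tau,z)\mapsto\bigl((a\tau+b)/(c\tau+d),\,z/(c\tau+d)\bigr)$: one writes $F(\tau,z)=\sum_i k(k+1)\cdots(k+i-1)\,f^{(r-i)}(\tau)\,z^{r-i}$ and checks directly that
\[
F\Bigl(\tfrac{a\tau+b}{c\tau+d},\tfrac{z}{c\tau+d}\Bigr)=(c\tau+d)^k\,\mathrm{Sym}^r\!\begin{pmatrix}c\tau+d & cz\\ 0 & 1\end{pmatrix}\circ F(\tau,z).
\]
This makes the triangular matrix $M(\gamma,\tau)$ you allude to completely explicit (diagonal entries $c\tau+d$ and $1$, determinant $c\tau+d$) and turns the ``combinatorial matching'' you flag as the main obstacle into a routine induction on the number of differentiations. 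Your direct coefficient-vector formulation is equivalent but leaves that step as a slightly heavier bookkeeping exercise; the $z$-variable trick is what the paper buys. Your closing remark that $f\mapsto\Psi_I(f)$ is homogeneous of degree $d$ rather than linear is well observed.
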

\begin{proof} In the proof we will write $\delta$ for the degree of $I$.
To $f\in M_k(\Gamma)$ we associate the vector-valued function
$$
F=(F_0,\ldots,F_r)^t \colon \mathfrak{H} \to \CC^{r+1}\, ,
$$
where $F_i(\tau)=i! \binom{k+r-1}{i} f^{(r-i)}(\tau)$
for $i=0, \ldots,r$.
By repeatedly differentiating with respect to $\tau$ the functional equation
$f((a\tau+b)/(c\tau+d))=(c\tau +d) ^k f(\tau)$ 
we get
$$
F\left(\frac{a\tau+b}{c\tau+d}\right)=
(c\tau+d)^k\,
\Sym^r
\left(
\begin{matrix}
(c\tau+d)^2 & c(c\tau+d) \\ 0 & 1
\end{matrix}
\right)
F(\tau) \eqno(\star)
$$
for any
$\gamma=\left(
\begin{matrix}
a & b \\ c & d
\end{matrix}
\right) \in \Gamma$.
Thus by the substitution $a_i \mapsto F_i(\tau)$ we obtain a binary
form of degree $r$ and $\gamma$ defines an action on it by  
$$
A=\left( \begin{matrix} (c\tau+d)^2 & c(c\tau+d) \\ 0 & 1 \\ \end{matrix}
\right) \in {\rm GL}(2,{\CC})\, .
$$
Since $I$ is an invariant of order $n$ it follows that $\Psi(I,f)$ 
changes by $\det(A)^n=(c\tau+d)^{2n}$ under this action. As $I$ is of degree
$\delta$  we get under the substitution 
$\tau \mapsto (a\tau+b)/(c\tau+d)$ in $F$
also a factor $(c\tau+d)^{\delta k}$ in $\Psi(I,f)$ as equation ($\star$) shows.
Together we get that $\Psi(I,f)$ transforms as a modular form of 
weight $\delta k+2n$ on $\Gamma$.
The fact that $\Psi(I,f)$ is holomorphic on $\mathfrak{H}$ is clear and 
also the conditions
at the cusps of $\Gamma$ are easily checked. That for
fixed $f$ the map $I \to \Psi(I,f)$ is 
obtained by substitution makes it clear that it is a homomorphism.
\end{proof}
Note that $\Psi(I,cf)=c^d \Psi(I,f)$ for $I$ of degree $d$ and $c \in {\CC}$.
Often we shall use the notation 
$$
\Psi_I(f)=\Psi(I,f)\, .
$$
\begin{remark} Of course,
the theorem applies as well to modular forms with a character. We then get a map
$$
\Psi_I: M_k(\Gamma,\chi) \to M_{kd+2n}(\Gamma,\chi^d)\, ,
$$
where $d$ is the degree of $I$.
\end{remark}
The theorem makes the relation between transvectants and Rankin-Cohen brackets
transparent. Recall that invariant theory associates to a pair $(F,G)$ of binary forms of degree
$m$ and $n$ a so-called transvectant in $V_{m+n-2r}$ defined by
$$
(F,G)_r=\frac{(m-r)!(n-r)!}{m! \, n!} \sum_{j=0}^r (-1)^j \binom{r}{j}
\frac{\partial^r F}{\partial x_1^{r-j}\partial x_2^{j} }
\frac{\partial^r G}{\partial x_1^{j}\partial x_2^{r-j} } \, .
$$
We now replace $r$ by $2r$ and take $m=n=2r$ and $F=G$. 
If $F \in V_{2r}$ denotes the universal binary form of degree $2r$
then the invariant $I=(F,F)_{2r}$ is of degree $2$ and 
order $2r$. We apply this to the binary form $F$ associated 
as in the proof of Theorem \ref{thm} 
to a modular form $f \in M_k(\Gamma)$ and 
it gives
$$
\Psi_I(f)= (2r)! \, (2\pi i)^{2r} \,[f,f]_{2r} \in M_{2k+4r}(\Gamma)
$$
with $[f,f]_{2r}$ the Rankin-Cohen bracket. Recall that for a pair of
modular forms $f_1 \in M_{k_1}(\Gamma)$, $f_2 \in M_{k_2}(\Gamma)$ 
the $r$th Rankin-Cohen bracket $[f,g]_{r}$
is defined as
\[
[f,g]_r=\frac{1}{(2\pi i)^r}
\sum_{n+m=r}
(-1)^n
\binom{k_1+r-1}{m}
\binom{k_2+r-1}{n}
\frac{d^n f}{d\tau^n}
\frac{d^m g}{d\tau^m}
\]
and is an element of $M_{k_1+k_2+2r}(\Gamma)$ and is a cusp form for $r>0$.
It was introduced in \cite{Cohen} using results of \cite{Rankin}.
\begin{remark}
Note that for the case $r=0$ we are dealing with the covariant $FG$ and the product $fg$.
\end{remark}

We give an example.
The ring of invariants of $V_3$ is generated by 
$$
I_3=a_0^2a_3^2-6\,a_0a_1a_2a_3+4\,a_0a_2^3+4\,a_1^3a_3-3\,a_1^2a_2^2\, .
$$
By calculating the first few Fourier coefficients
we see that 
it gives for the normalized Eisenstein series $E_k=1 - (2k/B_k) \sum_{n\geq 1} \sigma_{k-1}(n)q^n$
on ${\rm SL}(2,{\ZZ})$ of weights $4$ and $6$ the results
$$
\Psi_{I_3}(E_4)=-53084160000\pi^6\,E_4\Delta^2, \quad
\Psi_{I_3}(E_6)=-203928109056\pi^6\,(8\, E_4^3+E_6^2)\Delta^2\, .
$$
\subsection{Examples from multi-invariants}
As the above suggests we can also apply invariant theory for a tuple of
binary forms. 
The invariant theory for the diagonal action of
${\rm GL}(2)$ on a direct sum
$V_{m_1}\oplus \cdots \oplus V_{m_s}$ provides a plethora
of invariants.

We may take the invariant $I=(F,G)_r$ for $F,G \in V_r$
and apply it to a pair of modular forms $(f,g)$. By
associating a binary form $F$ to $f$ and a binary form  
$G$ to  $g$ we can apply
multi-invariants and we find by applying the
substitution of Theorem \ref{thm} to both pairs $(F,f)$ and $(G,g)$ that
$$
\Psi_I(f,g)= (-1)^r r! (2\pi i)^r   [f,g]_r \, .
$$
We recover in a transparent way
the relation between Rankin-Cohen brackets and transvectants. 
This relation was apparently first observed by Zagier \cite[p.\ 74]{Zagier1994},
and there is an extensive literature on this relation, 
see for example~\cite{Olver}.

\smallskip
But there are many more invariants, bi-invariants and multi-invariants
than the trans\-vectants.
To a $s$-tuple of modular forms $(f_1,\ldots,f_s)$ 
with $f_j\in M_{k_j}(\Gamma)$ we associate a $s$-tuple 
$(F_1,\ldots,F_s)$ of binary forms $F_j \in V_{r_j}$
as in the proof of Theorem \ref{thm}. 
An invariant $I$ of the action of ${\rm GL}(2)$ on $\oplus V_{r_j}$
of degree $d_j$ in the coefficients of the
binary form $F_{j}$ 
and of order $n$ defines a map
$$
\Psi_I: \oplus_{j=1}^s M_{k_j}(\Gamma) 
\longrightarrow M_{2n+\sum_{j=1}^s d_jk_j}(\Gamma)\, 
$$
by substituting $i! \binom{k_j+r_j-1}{i}f_j^{(r_j-i)}$ for
the coefficient $a_i^{(j)}$ of the binary form $F_j$.

As a concrete example,
we take $(F,G)\in V_3\oplus V_1$ with binary forms 
$F=\sum_{i=0}^3 a_i\binom{3}{i} x_1^{3-i}x_2^i$
and $G=b_0x_1+b_1x_0$. In this case the  generators of the
ring of invariants are known, see \cite{Draisma}. 
For example, there is an invariant
$$
I= (F,G^3)_3=a_0b_1^3-3\, a_1b_0b_1^2+ 3\, a_2 b_0^2b_1-a_3b_0^3 \, .
$$
It defines a map
$\Psi_I: M_{k_1}(\Gamma) \times M_{k_2}(\Gamma) \longrightarrow M_{k_1+3k_2+6}(\Gamma)$,
e.g., 
$$
\Psi_{I}(E_6,E_4)=\sqrt{-1} \, 86016\pi^3\,\Delta(E_4^3+2\, E_6^2).
$$
As another example, take binary forms $F,G,H$ of degrees
$3,2,1$ with coefficients $a_i,b_i,c_i$ and consider the tri-invariant
$$
I=a_0b_2c_1-2\,a_1b_1c_1-a_1b_2c_0+a_2b_0c_1+2\,a_2b_1c_0-a_3b_0c_0\, .
$$
For $f,g,h$ modular forms of weights $k_1,k_2,k_3$ on $\Gamma$
we have $\Psi_I(f,g,h)\in M_{k_1+k_2+k_3+6}(\Gamma)$. 

\end{section}

\begin{section}{The Second Observation}
Here we will be dealing with vector-valued modular forms and not
with derivatives.
We formulate the second observation for the case where the modular
form has weight~$\rho$, with $\rho$ denoting the highest weight
of an irreducible representation 
$V$ of ${\rm GL}(g)$. Let $U$ be the standard representation 
of ${\rm GL}(g)$.
The modular form can live on a bounded symmetric 
domain with factor of automorphy $\rho$, or on a moduli space where
it is a section of the vector bundle ${\EE}_{\rho}$ constructed
from the Hodge bundle ${\EE}$ by a Schur functor defined by $\rho$.

Recall that an invariant relative to the action of ${\rm GL}(g)$ on the
irreducible representation $V$ of highest weight $\rho$ is an element in
the algebra on $V$ invariant under ${\rm SL}(g)$. 
In fact, an invariant can be obtained by an equivariant embedding of
${\rm GL}(g)$-representations
$$
\det(U)^{\otimes m} \hookrightarrow {\rm Sym}^d(V).  \eqno(1)
$$
Viewing
the fibre of the dual bundle ${\EE}_{\rho}^{\vee}$ of ${\EE}_{\rho}$
as a ${\rm GL}(g)$-representation
and a section $f$ 
of ${\EE}_{\rho}$ as a function on ${\EE}_{\rho}^{\vee}$,
we can evaluate  ${\rm Sym}^d(f)$ via the embedding (1) on
the subbundle $(\det{\EE}^{\vee})^{\otimes m}$ and
obtain a section of the bundle $\det({\EE})^m$. 

More generally, if $W$ is an irreducible 
representation of ${\rm GL}(g)$ of highest weight $\sigma$,
an equivariant embedding 
$$
W \hookrightarrow {\rm Sym}^d(V) \eqno(2)
$$
defines a concomitant of type $(d,\sigma)$ for $\rho$.
Equivalently, the equivariant embedding $\phi: 
W \hookrightarrow {\rm Sym}^d(V)$ may be viewed  as an equivariant
embedding
$\phi':{\CC} \to {\rm Sym}^d(V)\otimes W^{\vee}$. Then the image
$\phi'(1)$ is called a concomitant.

Again, if $f$ is a modular form, say a section of ${\EE}_{\rho}$,
then we can view $f$ as a function on ${\EE}_{\rho}^{\vee}$, and
${\rm Sym}^d(f)$ as a function on ${\rm Sym}^d({\EE}_{\rho}^{\vee})$.
Then by restriction to $({\EE}^{\vee})_{\sigma}$ via (2) 
we obtain a modular form of weight $\sigma$
from $f$. Alternatively, by projection in ${\rm Sym}^d({\EE}_{\rho})$
of the section ${\rm Sym}^d(f)$ we get a form of weight $\sigma$. 
By the phrase `applying the invariant or concomitant
to~$f$' we mean this evaluation. We thus obtain the following.

\begin{observation}\label{observation}
Let $f$ be a modular form of weight $\rho$ and let $I$ be a concomitant
of type $(d,\sigma)$ for some $d$ 
for the action of ${\rm GL}(g)$ on the representation $\rho$. 
Then by applying $I$ to $f$ one obtains a modular form of weight $\sigma$. 
\end{observation}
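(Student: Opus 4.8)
The plan is to push the ${\rm GL}(g)$-equivariance of the concomitant through the associated-bundle construction, in the same spirit as the proof of Theorem~\ref{thm}, but now for the full group ${\rm GL}(g)$ rather than the ${\rm GL}(2)$ that intervened there. I would work on the symmetric-domain side, where a modular form $f$ of weight $\rho$ is a holomorphic $V$-valued function satisfying the transformation law $f(\gamma\cdot x)=\rho\bigl(j(\gamma,x)\bigr)f(x)$, the automorphy factor $j(\gamma,x)\in{\rm GL}(g)$ being the one attached to the Hodge bundle ${\EE}$ (whose standard fibre carries the representation $U$). The goal is to show that the result of applying $I$ to $f$ satisfies the same law with $\rho$ replaced by $\sigma$; holomorphy and the behaviour at the boundary will take care of themselves. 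A pleasant feature, by contrast with Theorem~\ref{thm}, is that here no differentiation is needed to manufacture the ${\rm GL}(g)$-cocycle: it is already present in ${\EE}$, so the argument is purely representation-theoretic.

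First I would make precise what ``applying $I$'' means. Taking the $d$-fold symmetric power of the single section $f$ produces ${\rm Sym}^d(f)\in\Gamma\bigl({\rm Sym}^d{\EE}_{\rho}\bigr)$, a holomorphic ${\rm Sym}^d(V)$-valued object transforming by ${\rm Sym}^d\bigl(\rho(j(\gamma,x))\bigr)$. Since ${\rm GL}(g)$ is reductive, the image of the embedding (2) is a direct summand, so $\phi$ admits an equivariant left inverse $\pi\colon{\rm Sym}^d(V)\twoheadrightarrow W$ with $\pi\circ\phi={\rm id}_W$; applying $I$ to $f$ is then the formation of $\pi\circ{\rm Sym}^d(f)$, a holomorphic $W$-valued function. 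This is consistent with the invariant case treated above, where $W=\det(U)^{\otimes m}$ and the output is a section of $\det({\EE})^m$, and equivalently with the dual-bundle formulation of the text, in which one evaluates ${\rm Sym}^d(f)$, regarded as a function on ${\EE}_{\rho}^{\vee}$, against the subbundle with fibre $W^{\vee}$ cut out by (2).

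The key step is the transformation law, and it is now automatic. Functoriality of ${\rm Sym}^d$ together with the law for $f$ gives ${\rm Sym}^d(f)(\gamma\cdot x)={\rm Sym}^d\bigl(\rho(j(\gamma,x))\bigr)\,{\rm Sym}^d(f)(x)$, while the ${\rm GL}(g)$-equivariance of $\pi$ means precisely that $\pi$ intertwines the action of ${\rm GL}(g)$ on ${\rm Sym}^d(V)$ with the action $\sigma$ on $W$. Composing, one finds that $\pi\circ{\rm Sym}^d(f)$ transforms by $\sigma\bigl(j(\gamma,x)\bigr)$, which is exactly the weight-$\sigma$ law. Holomorphy is clear because ${\rm Sym}^d$ is polynomial and $\pi$ is linear; regularity at the cusps or boundary is inherited from $f$ since the whole construction is algebraic and fibrewise, and for $g\ge 2$ it is in any case automatic by the Koecher principle. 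Packaged bundle-theoretically, the equivariant $\pi$ induces a morphism of automorphic vector bundles $\Pi\colon{\rm Sym}^d{\EE}_{\rho}\to{\EE}_{\sigma}$, and the sought form is simply $\Pi\bigl({\rm Sym}^d f\bigr)$, a section of ${\EE}_{\sigma}$.

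The result is thus essentially formal once the set-up is in place, and I would expect the only real content---the crux to articulate clearly---to be the single assertion that the equivariance condition defining a concomitant is exactly what makes the substitution ``$U\rightsquigarrow{\EE}$'' convert an algebraic identity of ${\rm GL}(g)$-representations into an identity of automorphic bundles. This is the structural counterpart of the determinant-power bookkeeping at the end of the proof of Theorem~\ref{thm}. The one point requiring genuine care, rather than formal manipulation, is the boundary/holomorphy condition in the definition of ``modular form'', which is why I would invoke Koecher's principle for $g\ge 2$ and otherwise read the cusp condition off from that of $f$.
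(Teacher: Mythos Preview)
Your argument is correct and matches the paper's approach: the paper does not give a separate proof of the Observation at all, but treats it as an immediate consequence of the preceding discussion of equivariant embeddings (1) and (2) and the phrase ``applying $I$ to $f$'' as evaluation of ${\rm Sym}^d(f)$ on the sub\-bundle cut out by the concomitant. What you have written is a careful and entirely faithful expansion of that one sentence into the automorphy-factor language, with the added remarks on Koecher's principle and boundary behaviour that the paper leaves implicit.
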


Instead of spelling this out in detailed notations we will illustrate
it by a couple of examples in the next section.
One may formulate variants of this involving a finite set of 
modular forms to which invariant theory (via multi-invariants) is applied.
\end{section}
\begin{section}{Illustrations}
We now illustrate the second observation by a number of special cases.

\subsection{Siegel modular forms of degree two}
Here we are dealing with the representation theory of ${\rm GL}(2)$ 
and the invariant theory of binary forms. 
There is a wealth of explicit results in invariant theory
that can be applied.

Let $\Gamma \subset {\rm Sp}(4,{\QQ})$ be a group commensurable with
${\rm Sp}(4,{\ZZ})$. We let $\rho$ be an irreducible representation of
${\rm GL}(2)$ of highest weight $(j+k,k)$.
By Observation \ref{observation} we find for a given modular form 
$f \in M_{j,k}(\Gamma)$, that is, 
a section of ${\rm Sym}^j({\EE})\otimes \det({\EE})^k$,  
a homomorphism 
$$
\Psi_{\bullet}(f): \mathcal{I}(U_j) \longrightarrow R(\Gamma), \qquad
J \mapsto \Psi_J(f) \, ,
$$
of the ring of invariants $\mathcal{I}(U_j)$ of binary forms of degree $j$
to the ring $R(\Gamma)$ of scalar-valued modular forms on $\Gamma$.

Let us illustrate this with the simplest non-trivial case for $j=2$.
The discriminant $I=a_1^2-4a_0a_2$ of a quadratic form 
$a_0x_1^2+a_1x_1x_2+a_2x_2^2$ gives an invariant of degree $2$.
If $f$, the transpose of $(f_0,f_1,f_2)$, is a modular form of weight
$(2,k)$ on some congruence subgroup $\Gamma$ of
${\rm Sp}(4,{\ZZ})$ then we find a scalar-valued one
$\Psi_I(f)=f_1^2-4f_0f_2 \in M_{0,2k+2}(\Gamma)$.
The weight $(0,2k+2)$ follows from the identity of ${\rm GL}(2)$-representations 
$$
{\rm Sym}^2(V_2)=V_4\oplus\det(V)^2\, .
$$

This can be extended to tuples of modular forms by using
multi-invariants of binary forms. We write $U_n={\rm Sym}^n(U)$
with $U$ the standard representation of ${\rm GL}(2)$.
We consider the ring of invariants
$$
\mathcal{I}_{n_1,\ldots,n_m}=\mathcal{I}(U_{n_1} \oplus
\cdots \oplus U_{n_m})
$$
relative to the action of ${\rm GL}(2)$. One can consider 
elements of $\mathcal{I}_{n_1,\ldots,n_m}$ as (multi-)invariants of a $m$-tuple
of binary forms $b_1,\ldots,b_m$. 
Let  $J \in \mathcal{I}_{n_1,\ldots,n_m}$ be a multi-invariant that is
of degree $(d_1,\ldots,d_m)$ in the coefficients of
the binary forms $b_1,\ldots,b_m$. Then applying $J$ to an
$m$-tuple of modular forms $f_i \in M_{j_i,k_i}(\Gamma_i)$
defines a map
$$
\prod_{i=1}^m M_{j_i,k_i}(\Gamma) \longrightarrow
M_{0,k}(\Gamma) 
\qquad \text{ with $k=\sum_{i=1}^m d_i(k_i+j_i/2)$.}
$$
The weights are obtained from representation theory as in the example above. 
For example, take $(n_1,n_2)=(4,2)$. If we write
$$
b_1= \sum_{i=0}^4 \alpha_i x_1^{4-i}x_2^i, \quad
b_2=\sum_{i=0}^2 \beta_i x_1^{2-i}x_2^i\, , 
$$
where now (unlike before) 
for convenience we do not use binomial coefficients in the binary forms,
we have the invariant
$$
J_{1,2}= 6\, \alpha_0\beta_2^2 -3\, \alpha_1\beta_1 \beta_2 
+ 2\, \alpha_2 \beta_0 \beta_2 + \alpha_2\beta_1^2 -3\, \alpha_3\beta_0\beta_1 + 6\, \alpha_4 \beta_0^2\, .
$$ 
The invariant $J_{1,2}$ defines a map
$$
M_{4,k_1}(\Gamma_1) \otimes M_{2,k_2}(\Gamma_2) 
\longrightarrow 
M_{0,k_1+2k_2+4}(\Gamma)\, .
$$

Next we look at covariants of binary forms. Recall that the ring 
of covariants $\mathcal{C}(U_j)$ 
of a binary form of degree $j$ can be identified 
with the ring of invariants $\mathcal{I}(U_j\oplus U_1)$ 
of $U_j \oplus U_1$, see \cite[3.3.9]{Springer}.  If we write
an element of $U_1$ as $l_1x_1+l_2x_2$, the isomorphism 
can be given explicitly by substituting $l_1=-x_2$ and $l_2=x_1$
in an invariant of $U_j\oplus U_1$.

The following proposition is a direct consequence of Observation 
\ref{observation}.
\begin{proposition}
If $C \in \mathcal{C}(U_j)$ is a covariant of 
degree $a$ in the coefficients
of the binary form and of degree $b$ in $x_1,x_2$,
then applying $C$ defines a map
$$
\Psi_C: M_{j,k}(\Gamma) \longrightarrow
M_{b,a(k+j/2)-b/2}(\Gamma)\, .
$$
\end{proposition}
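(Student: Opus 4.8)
The plan is to deduce this directly from Observation~\ref{observation}, using the dictionary $\mathcal{C}(U_j)\cong\mathcal{I}(U_j\oplus U_1)$ recalled just above to make sense of ``applying a covariant'': a covariant of degree $a$ and order $b$ is the invariant of $U_j\oplus U_1$ of bidegree $(a,b)$ obtained by the substitution $l_1=-x_2,\ l_2=x_1$, which is precisely the concomitant form of Observation~\ref{observation}. Rather than chase the dual representations through that statement, though, I would verify the transformation law by hand, mimicking the proof of Theorem~\ref{thm} but with ${\rm GL}(2)$ now acting through the automorphy factor.

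First I would make $\Psi_C(f)$ explicit. Writing $f={}^t(f_0,\dots,f_j)$ as the binary form $\Phi_\tau=\sum_{i=0}^j f_i(\tau)\binom{j}{i}x_1^{j-i}x_2^i$, the coefficients of the binary form $C(\Phi_\tau)$ of order $b$ are degree-$a$ polynomials in the $f_i(\tau)$; as functions of $\tau$ these are the components of $\Psi_C(f)$, so a priori $\Psi_C(f)$ is a candidate section of ${\rm Sym}^b(\EE)\otimes\det(\EE)^{k'}$ for some $k'$ to be determined. In particular $\Psi_C$ is homogeneous of degree $a$ in $f$, so the asserted linearity is linearity of the assignment $C\mapsto\Psi_C$.

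Next I would compute the automorphy factor. For $\gamma\in\Gamma$ let $N=N(\gamma,\tau)\in{\rm GL}(2)$ be the automorphy factor, so that the weight $(j,k)$ transformation of $f$ reads $\Phi_{\gamma\tau}=\det(N)^k\,(N\cdot\Phi_\tau)$, where $N\cdot$ denotes the degree-$j$ action of ${\rm GL}(2)$ on binary forms. Since $C$ is homogeneous of degree $a$ in the coefficients, applying $C$ pulls the scalar $\det(N)^k$ out as $\det(N)^{ak}$, while the covariance of $C$ gives $C(N\cdot\Phi)=\det(N)^{w}\,{\rm Sym}^b(N)\,C(\Phi)$, with $w$ the isobaric weight of $C$. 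The power $w$ is pinned down by the scaling $N=\lambda\,\mathbf{1}$: there $N\cdot\Phi=\lambda^{j}\Phi$ forces $\lambda^{aj}=\lambda^{2w+b}$, i.e.\ $w=(aj-b)/2$. Combining the two factors yields
$$
\Psi_C(f)(\gamma\tau)=\det(N)^{ak+w}\,{\rm Sym}^b(N)\,\Psi_C(f)(\tau),
\qquad ak+w=a\left(k+\tfrac{j}{2}\right)-\tfrac{b}{2},
$$
which is exactly the weight $\big(b,\,a(k+j/2)-b/2\big)$ transformation law.

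The point requiring care is this determinant bookkeeping: the identification $w=(aj-b)/2$, and the fact that the output transforms under the \emph{same} ${\rm Sym}^b(N)$ as a genuine weight-$(b,\cdot)$ form. The latter is the defining covariance property of $C$, exact for ${\rm SL}(2)$ and extended to ${\rm GL}(2)$ by the central character computed above. What then remains is routine: $\Psi_C(f)$ is holomorphic because it is a polynomial in the holomorphic components $f_i$, and its regularity at the boundary is automatic by the Koecher principle, so that $\Psi_C(f)\in M_{b,\,a(k+j/2)-b/2}(\Gamma)$.
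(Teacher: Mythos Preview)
Your argument is correct. You verify the automorphy factor directly: write $f$ as a binary form $\Phi_\tau$, use the ${\rm GL}(2)$-covariance $C(N\cdot\Phi)=\det(N)^{w}\,{\rm Sym}^b(N)\,C(\Phi)$ with $w=(aj-b)/2$ read off from the central character, and combine with the $\det(N)^{ak}$ from homogeneity. Together with Koecher this gives the result. Your remark that ``linearity'' must refer to $C\mapsto\Psi_C(f)$ rather than to $f\mapsto\Psi_C(f)$ is well taken; the map in $f$ is of course polynomial of degree~$a$.

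The paper proceeds differently: it invokes Observation~\ref{observation} abstractly and only computes the determinant twist. Writing $U[n+m,m]$ for ${\rm Sym}^n(U)\otimes\det(U)^m$, it notes that $C$ corresponds to an equivariant embedding $U[b+l,l]\hookrightarrow{\rm Sym}^a(U[j+k,k])$, and since every irreducible constituent of ${\rm Sym}^a(U[j,0])$ has the shape $U[aj-r,r]$, matching highest weights after twisting by $\det(U)^{ak}$ forces $2l=aj+2ak-b$. This is a purely representation-theoretic bookkeeping of the same identity $w=(aj-b)/2$ that you obtain via the scaling $N=\lambda\mathbf{1}$. Your approach is more hands-on and makes the actual transformation law visible (and you are explicit about holomorphy and the Koecher principle, which the paper leaves implicit); the paper's is terser and stays closer to the language of Schur functors set up in Section~3. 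Both land on the same weight computation.
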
 
\begin{proof} We write $U[n+m,m]$ for ${\rm Sym}^n(U)\otimes \det(U)^m$.
The covariant $C$ corresponds to an equivariant embedding 
$U[b+l,l]\hookrightarrow {\rm Sym}^a(U[j+k,k])$ for some $l$.
To determine $l$ we take out a factor $\det(U)^{\otimes k}$ 
and look at the 
irreducible representations occurring in
${\rm Sym}^a(U[j,0])$ and these are of the form 
$U[aj-r,r]$ for non-negative $r$.
Therefore, if $U[b+l,l]$ occurs then $(b+l,l)=(aj-r+ak,r+ak)$, that is, $2l=aj+2ak-b$.
\end{proof}
If we fix a modular form $f \in M_{j,k}(\Gamma)$ we get
an induced map
$$
\Psi_{\bullet}(f): \mathcal{C}(U_{j}) \to M(\Gamma), \quad C \mapsto \Psi_C(f)\, ,
$$
where $M(\Gamma)$ is the ring of vector-valued Siegel modular forms
on $\Gamma$ of degree $2$.

In the paper \cite{CFG1} it was shown that for 
$\Gamma={\rm Sp}(4,{\ZZ})$, $j=6$ and $f=\chi_{6,8}$, 
a generator of the space of cusp forms
$S_{6,8}({\rm Sp}(4,{\ZZ}))$, every
vector-valued modular form on ${\rm Sp}(4,{\ZZ})$ can be obtained
from a form $\Psi_C(\chi_{6,8})$ for a $C \in \mathcal{C}(U_6)$
after dividing by an appropriate power of
the cusp form $\chi_{10}$ of weight~$10$.

Alternatively, 
using the meromorphic modular form 
$\chi_{6,-2}=\chi_{6,8}/\chi_{10}$, we found maps
$$
M({\rm Sp}(4,{\ZZ})) \longrightarrow \mathcal{C}(U_{6})
\xrightarrow{\Psi_{\bullet}(\chi_{6,-2})} M({\rm Sp}(4,{\ZZ}))[1/\chi_{10}]\, ,
$$
the composition of which is the identity.

\smallskip

A variation of this deals with multi-covariants.  We can also allow 
modular forms with a character.

\begin{proposition}
If $C$
is a covariant in $\mathcal{C}(U_{j_1} \oplus \cdots \oplus U_{j_m})$
of degree $a_i$ in the coefficients of the binary form in $U_{j_i}$
and degree $b$ in $x_1,x_2$, then $C$ defines a map
$$
\otimes_{i=1}^m M_{j_i,k_i}(\Gamma,\chi_i) \longrightarrow
M_{b,k}(\Gamma, \chi_1^{a_1}\cdots\chi_m^{a_m})
$$
with $k= \sum_{i=1}^m a_i(k_i+j_i/2) -b/2$.
\end{proposition}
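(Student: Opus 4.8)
The plan is to follow the proof of the single-covariant proposition, replacing the single symmetric power by a tensor product of symmetric powers indexed by the $m$ forms. As there, write $U[n+m,m]$ for $\Sym^n(U)\otimes\det(U)^m$, so that a form in $M_{j_i,k_i}(\Gamma,\chi_i)$ is a section of the bundle attached to $U[j_i+k_i,k_i]$. First I would translate the multi-covariant $C$, which is homogeneous of degree $a_i$ in the coefficients of the $i$th binary form and of degree $b$ in $x_1,x_2$, into a ${\rm GL}(2)$-equivariant embedding
$$
U[b+l,l]\hookrightarrow \bigotimes_{i=1}^m \Sym^{a_i}\bigl(U[j_i+k_i,k_i]\bigr)
$$
for a determinant twist $l$ still to be determined. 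As recalled in the text, $C$ may equally be viewed as a multi-invariant of $U_{j_1}\oplus\cdots\oplus U_{j_m}\oplus U_1$, the degree $b$ in $x_1,x_2$ being the degree in the coefficients of the extra copy of $U_1$; this is what guarantees that such an embedding exists.

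Next I would apply this embedding to the given tuple in the manner of Observation \ref{observation}. Forming $\Sym^{a_i}(f_i)$ for each $i$ and evaluating the embedding on $\bigotimes_{i=1}^m \Sym^{a_i}(f_i)$ produces a section of the bundle attached to $U[b+l,l]=\Sym^b(\EE)\otimes\det(\EE)^l$, that is, an element of $M_{b,l}(\Gamma)$. This evaluation is linear in each factor $\Sym^{a_i}(f_i)$, hence homogeneous of degree $a_i$ in $f_i$; consequently the resulting form transforms by the product character $\chi_1^{a_1}\cdots\chi_m^{a_m}$, and we obtain the map asserted in the statement.

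It remains to identify $l$ with $k$, which is the only real computation. Discarding the determinant twists, the $\Sym(U)$-part of $\bigotimes_{i=1}^m \Sym^{a_i}(U_{j_i})$ is homogeneous of total degree $N=\sum_{i=1}^m a_i j_i$ in $x_1,x_2$, so its irreducible constituents are exactly the $U[N-r,r]$ with $0\le r\le N/2$; the one containing $\Sym^b(U)$ is that with $N-2r=b$, i.e. $r=(N-b)/2$. Reinstating the twists, each factor $\Sym^{a_i}(U[j_i+k_i,k_i])$ carries an additional $\det(U)^{a_i k_i}$, so altogether
$$
l=\frac{N-b}{2}+\sum_{i=1}^m a_i k_i=\sum_{i=1}^m a_i\Bigl(k_i+\frac{j_i}{2}\Bigr)-\frac{b}{2},
$$
which is the value of $k$ in the statement. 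I expect the main obstacle to be purely bookkeeping: one must keep the degree in $x_1,x_2$ and the determinant twist separate across the $m$ tensor factors, so that the two contributions to $l$ are summed correctly. Everything else is an immediate generalization of the single-covariant case.
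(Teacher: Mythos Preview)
The paper does not give a separate proof of this proposition; it is stated as an immediate variation of the preceding single-covariant proposition, whose proof you have correctly generalized. Your argument---interpreting the multi-covariant as an equivariant embedding $U[b+l,l]\hookrightarrow\bigotimes_i\Sym^{a_i}(U[j_i+k_i,k_i])$, reading off $l$ from the total degree $N=\sum_i a_i j_i$ (equivalently, from the action of the center of ${\rm GL}(2)$), and then reinserting the determinant twists $\det(U)^{a_ik_i}$---is exactly the intended extension and is correct.
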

\subsection{Siegel and Teichm\"uller modular forms of degree three}
If $U_{\rho}$ is an irreducible representation of ${\rm GL}(3)$
of highest weight $\rho$ 
and $f$ is a Siegel modular form of weight $\rho$ on some group 
$\Gamma \subset {\rm Sp}(6,{\QQ})$ commensurable 
with ${\rm Sp}(6,{\ZZ})$, then we get a homomorphism
$$
\Psi_{\bullet}(f): \mathcal{I}(U_{\rho}) \longrightarrow R(\Gamma)
$$
of the ring $\mathcal{I}(U_{\rho})$ of invariants
to $R(\Gamma)$, the ring of scalar-valued modular forms on $\Gamma$.

We can extend this map. 
For a given  irreducible representation $\sigma$ 
of ${\rm GL}(3)$ we let 
$\mathcal{C}_{\sigma}(U_{\rho})$ be the $\mathcal{I}(U_{\rho})$
-module of covariants obtained from equivariant embeddings of
$U_{\sigma}$ into the symmetric algebra on $U_{\rho}$.
We get for a given form $f \in M_{\rho}(\Gamma)$ 
a map
$$
\Psi_{\bullet}(f): \mathcal{C}_{\sigma}(U_{\rho}) \longrightarrow
 M_{\sigma}(\Gamma), \quad
C \mapsto \Psi_C(f)\, ,
$$
where $M_{\sigma}(\Gamma)$ is the $R(\Gamma)$-module 
$\oplus_k M_{\sigma \otimes {\det}^k}(\Gamma)$.

The moduli space $\overline{\mathcal{M}}_3$ of stable curves of genus $3$
carries a Hodge bundle, 
also denoted by ${\EE}$. Its restriction to $\mathcal{M}_3$  is the pullback of the Hodge bundle on $\mathcal{A}_3$ under the Torelli map.
For given irreducible representation $\rho$ 
of ${\rm GL}(3)$
we have a vector bundle ${\EE}_{\rho}$ obtained by a Schur functor
from ${\EE}$ and $\rho$. Sections of ${\EE}_{\rho}$ on
$\overline{\mathcal{M}}_3$ are called Teichm\"uller forms of degree or genus $3$ and weight
 $\rho$. We have a graded ring of scalar-valued Teichm\"uller forms 
$T_3=\oplus_k H^0(\overline{\mathcal{M}}_3, \det({\EE})^k)$ of genus $3$. 
The ring $T_3$ is a quadratic extension of the
ring of scalar-valued Siegel modular forms
$R({\rm Sp}(6,{\ZZ}))$ by $\chi_9$, with
$\chi_9$ the Teichm\"uller modular cusp form of weight $9$ that vanishes 
simply on the closure of the hyperelliptic locus. 
It was introduced by Ichikawa, see \cite{Ichikawa}. Its square is a Siegel
cusp form of weight $18$, the product of the $36$ even theta constants.

Given a Teichm\"uller modular form $f$ of weight $\rho$ we have a similar map
$$
\Psi_{\bullet}(f): \mathcal{C}_{\sigma}(U_{\rho})\longrightarrow T_{\sigma}(\Gamma)\, ,
$$
where $T_{\sigma}(\Gamma)$ is the $T_3$-module 
$$ 
\oplus_k H^0(\overline{\mathcal{M}}_3, {\EE}_{\sigma}\otimes \det{\EE}^{\otimes k})\, .
$$
With $\chi_{4,0,8}$ a generator of the space of cusp forms
$S_{4,0,8}(\rm Sp(6,{\ZZ}))$,
the quotient
$\chi_{4,0,-1}=\chi_{4,0,8}/\chi_9$ is a meromorphic section of 
${\rm Sym}^4({\EE})\otimes \det({\EE})^{-1}$ on $\overline{\mathcal{M}}_3$. 
In \cite{CFG2} we used
this form to construct maps
$$
H^0(\overline{\mathcal{M}}_3, {\EE}_{\sigma})
\longrightarrow \mathcal{C}_{\sigma}({\rm Sym}^4(U))  
\xrightarrow{\Psi_{\bullet}(\chi_{4,0,-1})}
H^0(\overline{\mathcal{M}}_3, {\EE}_{\sigma})[1/\chi_9]
$$
the composition of which is the identity. Here $U$ is the standard representation
of ${\rm GL}(3)$. This enables one to construct
all Teichm\"uller and all Siegel modular forms of genus $3$ on ${\rm Sp}(6,{\ZZ})$ 
by concomitants
for the action of ${\rm GL}(3)$ on ternary quartics using $\chi_{4,0,-1}$.

\subsection{Teichm\"uller forms of genus $3$ and $4$}
In \cite{vdG-K} a Teichm\"uller modular form $f$ of
weight $(2,0,0,8)$, a section of ${\rm Sym}^2({\EE}) \otimes \det({\EE})^8$
on $\overline{\mathcal{M}}_4$,  is constructed. 
Here ${\EE}$ is the Hodge bundle on $\overline{\mathcal{M}}_4$.
This Teichm\"uller modular
form can not be obtained by pulling back a Siegel modular form
under the Torelli morphism. It is associated to the quadric containing
the canonical image of the generic curve of genus $4$.

As an invariant we now take
the discriminant $I$ of a quadratic form in four variables and apply
it to $f$. It yields a scalar-valued Teichm\"uller modular form 
of weight $34$ that vanishes on the closure of the 
locus of non-hyperelliptic curves of genus $4$ 
for which the unique quadric that contains the canonical image
is singular. Its square is the pull back of a Siegel modular form of
degree $4$
and weight $68$ that is the product of all the even theta characteristics.

An analogous case is given by the section $\chi_{2,0,4}$ of ${\rm Sym}^2({\EE}) \otimes 
\det({\EE})^4$ on the Hurwitz space 
$\overline{\mathcal{H}}_{3,2}$ of admissible covers of genus $3$ 
and degree $2$, constructed in \cite{vdG-K} and ${\EE}$ the corresponding Hodge bundle. 
Its discriminant is a modular form 
of weight $14$, related to the discriminant of binary octics, whose
square is the pullback of a Siegel modular cusp form of weight $28$.
\subsection{Picard modular forms}
Here we fix an imaginary quadratic field $F$ with ring of integers $O_F$
and consider a non-degenerate Hermitian form $h=z_1z_2'+z_1'z_2+z_3z_3'$
on the vector space $F^3$ with the prime denoting complex conjugation. 
The group of similitudes of $h$ is an algebraic group $G$
over ${\QQ}$ of type ${\rm GU}(2,1)$. The connected component 
$G^{+}({\RR})$ of $G({\RR})$ acts on the set
$\mathfrak{B}$ of negative complex lines
$$
\mathfrak{B}=\{ L : L \subset F^3\otimes_{\QQ} {\RR},\, \dim_{\CC}L=1, h_{|L}<0\}
$$
which can be identified with the complex $2$-ball 
$\{ (u,v) \in {\CC}^2: v+\bar{v}+u\bar{u}<0\}$. If $\Gamma$ is an arithmetic subgroup of $G$,
the quotient $\Gamma\backslash \mathfrak{B}$ is a moduli space of $3$-dimensional
abelian varieties with multiplication by $F$. It carries a Hodge bundle ${\EE}$
that splits as $W\oplus L$ of rank $2$ and $1$ and we obtain two factors of automorphy. 
We have $\det(W)^6=L^6$. 
We then have modular forms that can be seen as sections of ${\rm Sym}^j(W)
\otimes L^k$. 
If $\det(W)\neq L$ we have to deal with modular forms with a character.

In the paper \cite{CvdG2021} we considered the case where $F={\QQ}(\sqrt{-3})$
and $\Gamma=\Gamma[\sqrt{-3}]$ is a certain congruence subgroup. 
We constructed modular forms 
$\chi_{1,1} \in M_{1,1}(\Gamma[\sqrt{-3}], \det)$ 
and $\chi_{4,4}\in M_{4,4}(\Gamma[\sqrt{-3}],\det^2)$. 
We also have a scalar-valued cusp form $\zeta \in S_{0,6}(\Gamma[\sqrt{-3}],\det)$.
We refer to \cite{CvdG2021} for the notation.
The quotient $\chi_{4,-2}=\chi_{4,4}/\zeta$ is a 
meromorphic modular form of weight $(4,-2)$.
We can apply the second observation to this situation
starting from the two forms $\chi_{1,1}$ and $\chi_{4,-2}$.
We constructed in \cite{CvdG2021}  maps
$$
M(\Gamma) \to \mathcal{C}(V_1 \oplus V_4) \xrightarrow{\Psi_{\bullet}(\chi_{1,1},\chi_{4,-2})} M(\Gamma)[1/\zeta]
$$
from the ring $M(\Gamma)$ of vector-valued modular
forms to the ring $\mathcal{C}(V_1\oplus V_4)$ of bi-covariants 
for the action of ${\rm GL}(2)$ on
$V_1 \oplus V_4$. The map $\Psi_{\bullet}(\chi_{1,1},\chi_{4,-2})$ illustrates
Observation \ref{observation}. The composition of the maps is the identity
and this shows that we can obtain all modular forms this way.

\end{section}
\begin{section}{An example involving both observations}
The example we now treat deals with Picard modular forms living on the $2$-ball
and is inspired by both observations and constructs 
new vector-valued Picard modular
forms from given Picard modular forms and their derivatives. 
For simplicity's sake we deal with modular forms of weight $(1,k)$ 
on the Picard modular group $\Gamma[\sqrt{-3}]$, see \cite{CvdG2021} for 
definitions and notation.
Let $f \in M_{1,k}(\Gamma[\sqrt{-3}])$ be a modular form. 
We write 
$f$ as the transpose of 
$(f_0, f_1)$.
We can view
$f$ as defining a linear form $l=f_0x_1+f_1x_2$.
We put
$\partial f$ as the transpose of
$$
(f_{0u},(f_{1u}+f_{0v})/2,f_{1v})\, ,
$$
where $f_{iu}=\partial f_i/\partial u$ and $f_{iv}=\partial f_i/\partial v$
for $i=0,1$.
We view $\partial f$ as the vector of coefficients of a  binary form $q$ 
of degree $2$.

Writing $V=\langle x_1,x_2 \rangle$ and 
$l=a_0x_1+a_1x_2$ and $q=b_0x_1^2+2\, b_1x_1x_2+b_2x_2^2$ we
consider now multi-invariants for the action of 
${\rm GL}(2)$ on $V_1\oplus V_2$.

\begin{proposition}
Let $I$ be a multi-invariant of the binary forms $l$ and $q$. Assume
that $I$  has degree $d_1$
in the $a_i$ and degree $d_2$ in the $b_i$ and has order $n$. 
Then for $f\in M_{1,k}(\Gamma)$ the expression $\Psi_I(f,\partial f)$
obtained by evaluating the multi-invariant $I$ 
defines a Picard modular form of weight 
$(0,d_1\, k+d_2(k+1)+n)$ on $\Gamma$.
\end{proposition}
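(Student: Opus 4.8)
The plan is to run the mechanisms of Theorem~\ref{thm} and of Observation~\ref{observation} at once: the linear form $l$ records $f$ itself (as in the second observation, where a weight‑$(1,k)$ form \emph{is} a $V_1$‑valued object), the quadratic form $q$ records a first derivative of $f$ (as in the first observation), and the pair $(l,q)$ is then fed into the invariant theory of $V_1\oplus V_2$. First I would recall the transformation law of $f$. Writing $M(\gamma,(u,v))\in\GL(2,\CC)$ for the automorphy factor attached to $W$ and $\mu(\gamma,(u,v))$ for the scalar factor attached to $L$, a form of weight $(1,k)$ satisfies $f(\gamma\cdot(u,v))=\mu^{k}\,M\,f(u,v)$, and one has $\det M=\mu$ (up to the finite character that arises because $\det W$ and $L$ differ). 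In particular the coefficient vector $(f_0,f_1)$ of $l=f_0x_1+f_1x_2$ transforms as the coefficients of a binary linear form under $M$, twisted by $\mu^{k}$; this is exactly the input required by Observation~\ref{observation}.

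Next I would differentiate this functional equation in $u$ and $v$. The chain rule multiplies the transformed partials by the Jacobian of the $\gamma$‑action on $\mathfrak B$, which is itself an automorphy factor, so the differentiated identity is again governed by $M$ and $\mu$. The geometric point that produces the shift $k\mapsto k+1$ is that the cotangent bundle of the ball is $\Omega^1_{\mathfrak B}\cong W\otimes L$: one differentiation therefore turns a section of $W\otimes L^{k}$ into a $W\otimes(W\otimes L)\otimes L^{k}=W\otimes W\otimes L^{k+1}$‑valued object. The symmetrization $\partial f=(f_{0u},(f_{1u}+f_{0v})/2,f_{1v})$ is precisely the projection of $W\otimes W$ onto its symmetric summand $\Sym^2 W$, so the claim to establish is that $q$ transforms by $\Sym^2(M)\,\mu^{k+1}$, i.e.\ as the coefficient vector of a binary quadratic form of weight $(2,k+1)$.

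The hard part will be this differentiation step, and specifically the non‑tensorial correction terms: when the derivative falls on $\mu^{k}M$ rather than on $f$ it produces terms proportional to $f$, exactly the analogue of the quasi‑modular anomaly seen for $f'$ in the elliptic setting. The key thing to check, by a direct computation with the explicit cocycles of \cite{CvdG2021}, is that the chosen symmetrization annihilates these correction terms—they live in the complementary direction $\wedge^2 W\cong\det W\cong L$ and so are killed on passing to $\Sym^2 W$—equivalently, that the symmetric part of the coordinate derivative agrees with the $\mathcal O$‑linear Kodaira--Spencer derivative. Granting this, $q$ is a genuine modular form of weight $(2,k+1)$ rather than merely a quasi‑modular object.

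With the transformations of $l$ and $q$ in hand the conclusion is immediate from invariant theory. A multi‑invariant $I$ of $V_1\oplus V_2$ of bidegree $(d_1,d_2)$ and order $n$ satisfies $I(g\cdot l,g\cdot q)=\det(g)^{n}\,I(l,q)$ for $g\in\GL(2)$. Substituting $g=M$ together with the scalar factors $\mu^{k}$ and $\mu^{k+1}$ gives
\[
\Psi_I(f)(\gamma\cdot(u,v))=\mu^{\,d_1k+d_2(k+1)}\det(M)^{n}\,\Psi_I(f)(u,v)=\mu^{\,d_1k+d_2(k+1)+n}\,\Psi_I(f)(u,v),
\]
using $\det M=\mu$. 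Thus $\Psi_I(f)$ transforms by the scalar factor of weight $(0,\,d_1k+d_2(k+1)+n)$; holomorphy and the behaviour at the cusps are inherited from $f$ because $I$ is a polynomial in the entries of $l$ and $q$, which finishes the proof.
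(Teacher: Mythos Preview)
The paper's own proof is a single sentence: ``The proof follows the pattern of the proof of Theorem~\ref{thm}.'' So there is essentially nothing to compare against beyond that pointer, and your proposal is a reasonable fleshing-out of what such an argument must contain. You correctly isolate the differentiation step as the crux and you get the bookkeeping with $M$, $\mu$, and $\det M=\mu$ right, so that the final invariant-theory paragraph is sound once the transformation law for $q$ is established.

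One caution about your heuristic for that step. You argue that the non-tensorial correction terms land in $\wedge^2 W$ and are therefore annihilated by the symmetrization, so that $\partial f$ is \emph{genuinely} modular of weight $(2,k+1)$. That may well be what comes out of the explicit cocycles, but note that it is a stronger statement than what the ``pattern of Theorem~\ref{thm}'' actually uses: there the derivative vector $F$ is \emph{not} modular component-by-component; rather it transforms under $\Sym^r$ of an upper-triangular matrix whose off-diagonal entry encodes the correction, and only the determinant survives when one applies the invariant. In the present two-form setting a literal transcription of that mechanism would allow the correction for $q$ to be a multiple of $l^2$ (the shear $(l,q)\mapsto(l,q+\alpha\,l^2)$ is ${\rm SL}(2)$-equivariant, hence preserves every multi-invariant), rather than requiring it to vanish outright. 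So your $\wedge^2 W$ explanation is a plausible geometric reason, but the safer route---and the one closest to the cited pattern---is simply to carry out the differentiation with the explicit automorphy factors and observe that whatever correction appears is of a shape that the multi-invariant does not see. You acknowledge that this direct computation is ``the key thing to check''; that is indeed where the content lies, and the paper does not spell it out either.
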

The proof follows the pattern of the proof of Theorem \ref{thm}. 
Is seems complicated to
extend it to the case where higher derivatives are involved.

We finish by giving an example. If we let $I$ be the bi-invariant
$$
I=a_0^2b_2-2a_0a_1b_1+a_1^2b_0 
$$
and $f=E_{1,1}$, a vector-valued Eisenstein series defined in \cite{CvdG2021}, 
then it turns out that $\Psi_I(f)$ is a multiple of the 
scalar-valued modular form $\zeta$ of weight $6$ that occured above. 
More
precisely, with $f=E_{1,1} \in M_{1,1}(\Gamma[\sqrt{-3}],\det)$ 
with Fourier expansion
$$
E_{1,1}(u,v)=
\sum_{\alpha \in O_F}
\left[\begin{smallmatrix} X'(\alpha u) \\ \frac{2\pi}{\sqrt{3}}\bar{\alpha}X(\alpha u) \end{smallmatrix}\right]
q_v^{N(\alpha)}
=
\left[\begin{smallmatrix} X'(0) \\ 0 \end{smallmatrix}\right]+
6\left[\begin{smallmatrix} X'(u) \\ \frac{2\pi}{\sqrt{3}}X(u) \end{smallmatrix}\right]\,q_v +
6\left[\begin{smallmatrix} (XYZ)'(u) \\ \frac{2\pi}{\sqrt{3}}3XYZ(u) \end{smallmatrix}\right]\,q^3_v +
\ldots
$$
with $X,Y,Z$ elliptic modular functions, see \cite{CvdG2021} for the
notation, we get 
$$
\Psi_{I}(E_{1,1})(u,v)=8\pi^2a^2
\bigg(
Xq_v+
\frac{9X}{a^2}
(4(XX''-(X')^2)+a^2YZ)
\bigg)q_v^3+\ldots
$$
with $a=X'(0)$,
but we know that $XX''-(X')^2=-a^2YZ$, so we get
$$
\Psi_{I}(E_{1,1})(u,v)=
8\pi^2a^2
(
Xq_v-27XYZq_v^3+\ldots
)
=8\pi^2a^2 \zeta(u,v). 
$$ 
Here $\zeta \in S_{0,6}(\Gamma[\sqrt{-3}],\det)$ is the form that appeared in the
preceding section.

\bigskip

There are no data sets related to this article. The authors are not aware of any conflicts of interest.
\end{section}

\end{document}